\documentclass[reqno]{amsart}
\usepackage{amsmath}
\usepackage{amssymb}
\usepackage{amsthm}
\usepackage{mathrsfs}
\allowdisplaybreaks[4]

\makeatletter
 
  \@addtoreset{equation}{section}
 \makeatother

\def \ds{\displaystyle}
\def \wt{\widetilde}

\def \R{\mathbb{R}}
\def \C{\mathbb{C}}

\def \N{\mathbb{N}}
\def \v{\varphi}
\def \p{\rho}
\def \f{\phi}

\def \e{\varepsilon}
\def \l{\lambda}

\def \D{\Delta}

\def \pa{\partial}
\def \n{\nabla}

\def \Si{\Sigma}
\def \a{\alpha}

\def \n{\nabla}
\def \t{\theta}

\def \ta{\tau}

\def \ga{\gamma}

\newcommand{\im}{\operatorname{Im}}
\newcommand{\re}{\operatorname{Re}}
\theoremstyle{definition}
\newtheorem{dei}{Definition}[section]
\newtheorem{thm}{Theorem}[section]
\newtheorem{pro}{Proposition}[section]
\newtheorem{lem}{Lemma}[section]

\newtheorem{rem}{Remark}[section]

\newtheorem*{nota}{Notation}

\newcommand{\todayd}{\the\year/\the\month/\the\day}



\begin{document}
\title[The derivation of the conservation laws]{The derivation of  conservation laws for nonlinear Schr{\"o}dinger equations with a power type nonlinearity}
\author[K. Fujiwara]{Kazumasa Fujiwara}
\address[]{Department of Pure and Applied Physics, Waseda University, Tokyo 169-8555, JAPAN}
\email{k-fujiwara@asagi.waseda.jp}
\author[H. Miyazaki]{Hayato MIYAZAKI}
\address[]{Department of Mathematics, Graduate School of Science, Hiroshima University, Higashi-Hiroshima, 739-8521, JAPAN}
\email{h-miyazaki@hiroshima-u.ac.jp}
\keywords{nonlinear Schr{\"o}dinger equations, Conservation laws}
\subjclass[2010]{35A01, 35Q41, 35L65}

\begin{abstract}
For nonlinear Schr{\"o}dinger equations with a power nonlinearity, a new approach to derive the conservation law of the momentum and the pseudo conformal conservation law is obtained. Since this approach does not contain approximating procedure, the argument is simplified to derive these conservation laws.   
\end{abstract}

\maketitle 

\section{Introduction}
In this paper, we consider the Cauchy problem of nonlinear Schr{\"o}dinger equations with a power nonlinearity. 
\begin{align}
\left\{
\begin{array}{l}
\ds i\pa_{t}u+\frac{1}{2}\Delta u= \l|u|^{p-1}u,\quad t\in(0,T),\;x\in\R^{n}, \\
u(0,x)=\f(x),\quad x\in\R^{n},
\end{array}
\right. \label{nls}
\end{align}
where $u(t,x): (0,T)\times \R^{n} \rightarrow \C$, $\l \in \R$, $p>1$, and the initial data $\f$ is a complex valued function on $\R^{n}$. 
The equation (\ref{nls}) has been extensively studied both in the physical and mathematical literatures (see \cite{bib007}, \cite{bib001}). 
The conservation law of the momentum and the pseudo conformal conservation law play a role to investigate a asymptotic behavior of the solution of (\ref{nls}). For example, using the conservation law of the momentum, we study blow-up in finite time and dynamics of blow-up solutions (see \cite{bib006}). Moreover, the pseudo conformal conservation law implies that if $p \geq 1+ 4/n$, $\l>0$, then for any $r \in [2, \a(n)]$, we have a time decay estimate for the solution $u \in C([0,T], \Si )$ of (\ref{nls}) such that $\|u(t)\|_{L^{r}} \leq C|t|^{-n(\frac{1}{2}-\frac{1}{r})}$, where $\a(n)= 1+ 4/(n-2)$ if $n \geq 3$, $\a(n) =\infty$ if $n=1$, $2$  and $\Si = \{u \in H^{1}(\R^n)\ |\ xu \in L^{2}(\R^{n})\}$(see \S 7 of \cite{bib001}). 

We can obtain formally the conservation law of the moumentum $P(u)$ by multiplying the equation (\ref{nls}) by $\n \bar{u}$, integrating over $\R^{n}$, and taking the real part as follows:
\begin{align*}
0 &= 2\re \left(i\pa_{t}u+\frac{1}{2}\D u-\l|u|^{p-1}u, \n u \right)_{L^{2}} \\
&= 2\re (i\pa_{t}u, \n u)_{L^{2}} - 2\re (\l|u|^{p-1}u, \n u)_{L^{2}} = \frac{d}{dt}P(u(t)),
\end{align*}
where
\begin{align*}
P(u) = \im \int_{\R^{n}} u \n \overline{u} dx.
\end{align*}

We present two formal arguments to derive the pseudo conformal conservation law. One is that by using the virial identity, we derive it (see Theorem 7.2.1 of \cite{bib001}). Other is that applying a transform 
\begin{align}
u(t, x) = (it)^{-\frac{n}{2}}e^{\frac{i|x|^{2}}{2t}}\overline{v\left(\frac{1}{t}, \frac{x}{t}\right)} \label{tra:01}
\end{align}
to (\ref{nls}), we have a equation
\begin{align}
 i\pa_{t}v+\frac{1}{2}\Delta v= \l t^{\frac{n(p-1)-4}{2}}|v|^{p-1}v. \label{eq:11}
\end{align} 
Furthermore, by multiplying the equation (\ref{eq:11}) by $-\overline{\pa_{t}v}$, integrating over $\R^{n}$, and taking the real part, we deduce that 
\begin{align}
0 &=2\re \left(i\pa_{t}v+\frac{1}{2}\D v - \l t^{\frac{n(p-1)-4}{2}}|v|^{p-1}v, -\pa_t v \right)_{L^{2}} \nonumber \\
&= -\re \left( \D v , \pa_t v \right)_{L^{2}} +2\re \left( \l t^{\frac{n(p-1)-4}{2}}|v|^{p-1}v, \pa_t v \right)_{L^{2}} \nonumber \\
&= \frac{d}{dt}E_{1}(v(t)) - \{n(p-1)-4\}t^{\frac{n(p-1)-6}{2}}\frac{\l}{p+1}\|v(t)\|_{L^{p+1}}^{p+1}, \label{ps:01}
\end{align}
where
\[
E_{1}(v) = \frac{1}{2}\|\n v(u)\|_{L^{2}}^{2} + t^{\frac{n(p-1)-4}{2}}\frac{2\l}{p+1}\|v\|_{L^{p+1}}^{p+1}.
\]
Finally, since 
\begin{align}
\|\n v(t)\|_{L^{2}} = \|(y+is\n)u(s)\|_{L^{2}}, \quad  \|v(t)\|_{L^{p+1}} = s^{\frac{n(p-1)}{2(p+1)}}\|u(s)\|_{L^{p+1}} \label{tran:01} 
\end{align}
with $t=1/s$ and $y=x/t$ (see \S 7.5 of \cite{bib001}), the equality (\ref{ps:01}) yields the pseudo conformal conservation law
\begin{align}
&\frac{1}{2}\|(x+it \n) u(t)\|_{L^{2}}^{2} + t^{2}\frac{2\l}{p+1}\|u(t)\|_{L^{p+1}}^{p+1} \nonumber \\
&\qquad = \frac{1}{2}\|x \f\|_{L^{2}}^{2} - \frac{\l \{n(p-1)-4\}}{p+1}\int_{0}^{t}\ta \|u(\ta)\|_{L^{p+1}}^{p+1} d\ta \label{ps:02}
\end{align}
(see \S 7.5 of \cite{bib001}). To justify the procedures above, we require that at least $u$ and $v$ are $H^{2}$-solutions. For an $H^{s}$-solution with $s<2$, there are basically two methods to justify the procedure. One is that solutions are approximated by a sequence of regular solutions, using the continuous dependence of solutions on the initial data. Other is to use a sequence of regularized equations of (\ref{nls}) whose solutions have enough regularities to perform the procedure above (see \cite{bib002}). However, these two methods involve a limiting procedure on approximate solutions. Instead, for (\ref{nls}), Ozawa \cite{bib002} derive conservation laws of the charge and the energy by using additional properties of solutions provided by Strichartz estimates. We need the following notations and definitions to mention it:

\begin{nota}
For a Banach space $X$, $p \in [1, \infty]$ and an interval $I \subset \R$, $L_{t}^{p}X$ denotes the Banach space $L^{p}(I, X)$ equipped with its natural norm.
Let $U(t)$ be the Schr{\"o}dinger operator $e^{\frac{it}{2}\D}$. We denote by $f(u)$ the nonlinearity $\l|u|^{p-1}u$.
For $s \in \R$ and $p, q \in [1, \infty]$, Let $B^{s}_{p, q}(\R^{n})=B^{s}_{p, q}$ be the inhomogeneous Besov space in $\R^{n}$.
\end{nota}

\begin{dei}
\begin{enumerate}
\item A positive exponent $p'$ is called the dual exponent of $p$ if $p$ and $p'$ satisfy $1/p+1/p'=1$. 
\item A pair of two exponents $(p,q)$ is called an admissible pair if $(p.q)$ satisfies $2/p+n/q=n/2$, $p \geq 2$ and $(p,q) \neq (2,\infty)$. 
\end{enumerate}
\end{dei}
Strichartz estimates are described as the following lemma:
\begin{lem}[Strichartz estimates, see \cite{bib001}]
Let $s \in \R$ and $I \subset \R$ be an interval with $0 \in \bar{I}$. $(p_{1},q_{1})$ and $(p_{2},q_{2})$ denote admissible pairs. Let $t_{0} \in \bar{I}$. Then
\begin{enumerate}
\item for all $f\in L^{2}(\R^{n})$,
\[
\|U(t)f\|_{L^{p_{1}}(\R, L^{q_{1}}(\R^{n}))} \leq C\|f\|_{L^{2}(\R^{n})},
\]
\item for any $\v\in H^{s}(\R^{n})$,
\[
\|U(t)\v\|_{L^{\infty}(\R, H^{s}(\R^{n}))} \leq C\|\v\|_{H^{s}(\R^{n})},
\]
\[
\|U(t)\v\|_{L^{p_{1}}(\R, B^{s}_{q_{1}, 2}(\R^{n}))} \leq C\|\v\|_{H^{s}(\R^{n})},
\]
\item for all $f\in L^{p_{1}'}(I,L^{q_{1}'}(\R^{n}))$,
\[
\left\|\int_{t_{0}}^{t}U(t-\ta)f(\ta)d\ta \right\|_{L^{p_{2}}(I,L^{q_{2}}(\R^{n}))} \leq C\|f\|_{L^{p'_{1}}(I,L^{q'_{1}}(\R^{n}))},
\]
\item for any $f\in L^{1}(I,H^{s}(\R^{n}))$,
\[
\left\|\int_{t_{0}}^{t}U(t-\ta)f(\ta)d\ta \right\|_{L^{\infty}(I,H^{s}(\R^{n}))} \leq C\|f\|_{L^{1}(I,H^{s}(\R^{n}))},
\]
\item for all $f\in L^{p_{1}'}(I, B^{s}_{q_{1}', 2}(\R^{n}))$,
\[
\left\|\int_{t_{0}}^{t}U(t-\ta)f(\ta)d\ta \right\|_{L^{p_{2}}(I,B^{s}_{q_{2}, 2}(\R^{n}))} \leq C\|f\|_{L^{p'_{1}}(I,B^{s}_{q_{1}', 2}(\R^{n}))},
\]
\end{enumerate}
where $p'_{1}$ and $p'_{2}$ are the dual exponents of $p_{1}$ and $p_{2}$, respectively.
\end{lem}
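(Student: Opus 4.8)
Since the statement is quoted from \cite{bib001}, in the paper itself a citation suffices; the following indicates the route a self-contained proof takes. The plan is to reduce everything to two elementary facts about $U(t)=e^{\frac{it}{2}\D}$ and then run the classical $TT^{*}$ argument. First I would record that $U(t)$ is unitary on $L^{2}(\R^{n})$, and, more importantly, that it admits an explicit convolution kernel of the form $(2\pi i t)^{-n/2}e^{i|x|^{2}/(2t)}$, which yields the pointwise-in-time dispersive bound $\|U(t)f\|_{L^{\infty}}\le C|t|^{-n/2}\|f\|_{L^{1}}$. Interpolating between this bound and the $L^{2}$ isometry gives $\|U(t)f\|_{L^{q}}\le C|t|^{-n(1/2-1/q)}\|f\|_{L^{q'}}$ for every $q\in[2,\infty]$.

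Next, for the homogeneous estimate (i) I would set $Tf=U(\cdot)f$, so that formally $TT^{*}F=\int_{\R}U(t-\ta)F(\ta)\,d\ta$. Applying the dispersive bound inside the integral and then the Hardy--Littlewood--Sobolev inequality in the time variable estimates $\|TT^{*}F\|_{L^{p}_{t}L^{q}_{x}}$ by $C\|F\|_{L^{p'}_{t}L^{q'}_{x}}$ precisely when $(p,q)$ is admissible and $(p,q)\neq(2,\infty)$; the excluded pair is exactly the point at which HLS fails and the Keel--Tao argument would instead be needed. By the standard duality $\|T\|^{2}=\|TT^{*}\|$ this gives $\|U(t)f\|_{L^{p_{1}}_{t}L^{q_{1}}_{x}}\le C\|f\|_{L^{2}}$, i.e.\ (i), and dualizing gives the endpoint $\|\int_{\R}U(-\ta)F(\ta)\,d\ta\|_{L^{2}}\le C\|F\|_{L^{p'_{1}}_{t}L^{q'_{1}}_{x}}$. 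Composing these produces the non-retarded space-time estimate; to replace the full integral by the retarded one $\int_{t_{0}}^{t}U(t-\ta)F(\ta)\,d\ta$ appearing in (iii) and (v) I would invoke the Christ--Kiselev lemma, which applies because $p_{2}>p'_{1}$ strictly away from the endpoint. Estimate (iv) is more direct: it follows from unitarity of $U(t)$ on $H^{s}$ together with Minkowski's inequality in $\ta$.

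For the Besov- and $H^{s}$-valued bounds in (ii) and (v) I would combine the $L^{p}_{t}L^{q}_{x}$ estimates just obtained with a Littlewood--Paley decomposition. Since $U(t)$ is a Fourier multiplier it commutes with the dyadic projections; applying (i) (respectively the retarded estimate) to each piece and then taking the weighted $\ell^{2}$-sum over dyadic blocks, one interchanges the $\ell^{2}$-sum with the $L^{p}_{t}L^{q}_{x}$ norm via Minkowski's inequality --- legitimate since $p,q\ge 2$ --- to obtain the $B^{s}_{q,2}$-valued bounds. The estimate $\|U(t)\v\|_{L^{\infty}_{t}H^{s}}\le C\|\v\|_{H^{s}}$ is simply the statement that $U(t)$ preserves $H^{s}$ and requires nothing further.

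The main obstacle is bookkeeping rather than depth: one must check that the HLS exponent condition, the strict inequality needed for Christ--Kiselev, and the admissibility relation $2/p+n/q=n/2$ are mutually consistent, and confirm that the only case where this scheme genuinely breaks down, $(p,q)=(2,\infty)$, is exactly the pair excluded by hypothesis --- so that no appeal to the harder Keel--Tao endpoint theory is needed here.
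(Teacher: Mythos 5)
The paper offers no proof of this lemma --- it is quoted verbatim from Cazenave's book --- so there is nothing internal to compare against; your sketch must stand on its own. The architecture you describe (dispersive bound from the explicit kernel, interpolation with the $L^{2}$ isometry, the $TT^{*}$ argument with Hardy--Littlewood--Sobolev in time, Christ--Kiselev for the retarded estimates with mismatched pairs, unitarity plus Minkowski for (iv), and Littlewood--Paley with Minkowski's inequality for the $B^{s}_{q,2}$-valued versions) is the standard route and is correct for non-endpoint admissible pairs.

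There is, however, a genuine error in your closing claim that the only pair where the scheme breaks down is $(2,\infty)$ and that Keel--Tao is therefore unnecessary. Under the paper's definition of admissibility, for $n\geq 3$ the endpoint pair $(2,\,2n/(n-2))$ is \emph{not} excluded, and your scheme fails exactly there: the admissibility relation forces the time kernel in the $TT^{*}$ step to be $|t-\tau|^{-2/p}$, so at $p=2$ it is $|t-\tau|^{-1}$, which is precisely where Hardy--Littlewood--Sobolev is false; moreover the Christ--Kiselev lemma requires $p_{2}>p_{1}'$, which fails when both pairs are the endpoint. So to prove the lemma as stated one must either invoke the Keel--Tao endpoint theorem for that pair or restrict the statement to $p_{1},p_{2}>2$. (The latter restriction would in fact suffice for this paper: the pairs actually used, $(\ga,\p)$ in Proposition \ref{pro:01} and $(4(p+1)/(n(p-1)),\,p+1)$ in Proposition \ref{pro:02}, satisfy $\ga>2$ and $q>2$ strictly under the stated hypotheses on $p$. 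But that is a restriction you would need to state, not a fact your argument establishes.)
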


\section{Main results}

In this paper, for the equation (\ref{nls}), we derive the conservation law of the momentum and the pseudo conformal conservation law for time local solutions without approximating procedure. Instead of that, we use Ozawa's idea \cite{bib002}. For the conservation law of the momentum, our first result is as follows:
\begin{pro}\label{pro:01}
Let $1/2 \leq s<\min\{1, n/2\}$, $p>1$ and $\l \in \R$. Let an admissible pair $(\ga, \p)$ be as follows:
\begin{align}
\p = \frac{n(p+1)}{n+s(p-1)}, \quad \ga= \frac{4(p+1)}{(p-1)(n-2s)}. \label{ad:01}
\end{align}
Let $u \in C ([0, T], H^{s}(\R^{n})) \cap L^{\ga}((0, T), B^{s}_{\p, 2}(\R^{n}))$ be a mild solution of an integral equaiton
\begin{align}
u(t) = U(t)\f - i \int_{0}^{t}U(t-\ta)f(u(\ta))d\ta \label{ieq:01}
\end{align}
for some $\f \in H^{s}$ and $T>0$. Then $P(u(t))=P(\f)$ for all $t \in [0, T]$, where
\[
P(u):= \im \int_{\R^{n}} u \n \overline{u} dx. 
\] 
\end{pro}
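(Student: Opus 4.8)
The plan is to adapt Ozawa's device \cite{bib002}, used there for the charge and the energy, to the momentum: rather than approximating the solution $u$, I exploit the additional space--time integrability that the Strichartz estimates force on the Duhamel part of $u$, and I regularize only the (already fixed) nonlinearity $f(u)$ at the one place where a genuine Fubini argument is unavoidable. First I would record that, since $(\ga,\p)$ is admissible, the standard nonlinear estimate in Besov spaces together with the Sobolev embeddings $B^s_{\p,2}\hookrightarrow L^{q}$ and $H^s\hookrightarrow L^{q_0}$ (this is where $s<n/2$ is used) and H\"older in time yields $f(u)\in L^{\ga'}((0,T),B^s_{\p',2})$ --- the pair \eqref{ad:01} is chosen precisely so that this holds. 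Then, by the Strichartz estimate for the Duhamel term (Lemma~1.1~(v)) with the pairs $(\ga,\p)$ and $(\infty,2)$, the term $D(t):=u(t)-U(t)\f$ satisfies $D\in L^{\ga}((0,T),B^s_{\p,2})\cap C([0,T],H^s)$, so $\n D\in L^{\ga}((0,T),B^{s-1}_{\p,2})\cap C([0,T],H^{s-1})$, and the same holds for the Duhamel term $D_{t'}$ based at any $t'$.

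Next, fix $0\le t'\le t\le T$. From \eqref{ieq:01} and the group law one has $u(t)=U(t-t')u(t')-i\int_{t'}^t U(t-\ta)f(u(\ta))\,d\ta$, and with $D_{t'}(\ta):=-i\int_{t'}^\ta U(\ta-\s)f(u(\s))\,d\s$ one gets $U(\ta-t')u(t')=u(\ta)-D_{t'}(\ta)$. Writing $P(u)=-\im\langle\n u,u\rangle$ (with $\langle\cdot,\cdot\rangle$ the $H^{s-1}$--$H^{1-s}$ pairing, well defined since $s\ge1/2$) and expanding $\langle\n u(t),u(t)\rangle$ sesquilinearly, the diagonal term equals $\langle\n u(t'),u(t')\rangle$ because $U$ commutes with $\n$ and is unitary on every $H^\s$, so it remains to prove $\im(A+B+C)=0$, where
\[
A=\langle U(t-t')\n u(t'),D_{t'}(t)\rangle,\quad B=\langle\n D_{t'}(t),U(t-t')u(t')\rangle,\quad C=\langle\n D_{t'}(t),D_{t'}(t)\rangle .
\]
Each pairing is legitimate: at every fixed time the first slot is in $H^{s-1}$ and the second in $H^s\hookrightarrow H^{1-s}$, which uses $s\ge1/2$.

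To handle $A$ and $B$ I would approximate $f(u)$ by $f_k\in C^\infty_c((t',t);\mathcal S(\R^n))$ with $f_k\to f(u)$ in $L^{\ga'}((t',t),B^s_{\p',2})$, set $D_k(\ta):=-i\int_{t'}^\ta U(\ta-\s)f_k(\s)\,d\s$, and note that by the first step $D_k\to D_{t'}$ in $C([0,T],H^s)$ and $\n D_k\to\n D_{t'}$ in $C([0,T],H^{s-1})\cap L^{\ga}_tB^{s-1}_{\p,2}$. For each $k$ one may pull the pairing through the now classical Duhamel integral, move $U$ by unitarity, insert $U(\ta-t')u(t')=u(\ta)-D_{t'}(\ta)$, and integrate by parts once in $x$; passing to the limit --- using $\n u,\n D_{t'}\in L^{\ga}_tB^{s-1}_{\p,2}$ and $f(u)\in L^{\ga'}_tB^s_{\p',2}\hookrightarrow L^{\ga'}_tB^{1-s}_{\p',2}$ (again $s\ge1/2$) with $(B^{s-1}_{\p,2})'=B^{1-s}_{\p',2}$ and $1/\ga+1/\ga'=1$ --- one obtains
\[
A+B=2i\int_{t'}^t\re\langle\n u(\ta),f(u(\ta))\rangle\,d\ta-2i\int_{t'}^t\re\langle\n D_{t'}(\ta),f(u(\ta))\rangle\,d\ta .
\]
The first integral vanishes: $v\mapsto\re\langle\n v,f(v)\rangle$ is continuous on $B^s_{\p,2}$, and for $v\in\mathcal S(\R^n)$ one has $\re(\overline{f(v)}\,\n v)=\frac{\l}{p+1}\n(|v|^{p+1})$, so $\re\langle\n v,f(v)\rangle=\frac{\l}{p+1}\int_{\R^n}\n(|v|^{p+1})\,dx=0$; by density the same holds with $v=u(\ta)$ for a.e.\ $\ta$. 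The second integral is purely imaginary, and so is $C$ (since $\langle\n w,w\rangle=-\overline{\langle\n w,w\rangle}$). For the smooth $f_k$, Fubini, unitarity and one integration by parts give the classical identities $\re\langle\n D_k(\ta),f_k(\ta)\rangle=\int_{t'}^\ta\im\langle U(\ta-\s)\n f_k(\s),f_k(\ta)\rangle\,d\s$ and $\langle\n D_k(t),D_k(t)\rangle=2i\int_{t'}^t\!\int_{t'}^\ta\im\langle U(\ta-\s)\n f_k(\s),f_k(\ta)\rangle\,d\s\,d\ta$. Letting $k\to\infty$ (the left sides converge by the convergences above and the $L^1_\ta$-duality just used), the two surviving contributions are $-2I$ and $+2I$ for the same double integral $I$, whence $\im(A+B+C)=0$. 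Therefore $P(u(t))=P(u(t'))$ for all $0\le t'\le t\le T$, and $t'=0$ gives $P(u(t))=P(\f)$.

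The main obstacle is this last step: the integrands defining $A$, $B$, $C$ are only duality pairings, and $U(t)$ is not bounded on the $L^{\p'}$-based Besov spaces, so the Fubini/unitarity manipulations of the double time integral cannot be performed directly on $u$. Regularizing the fixed nonlinearity $f(u)$ --- rather than the solution --- and relying on the uniform Strichartz control of $D_{t'}$ is precisely what legitimizes the limit; this is the essence of Ozawa's idea \cite{bib002}. A secondary but essential point is keeping track, at every pointwise-in-time step, that the two factors lie in mutually dual spaces --- $H^{s-1}$ and $H^{1-s}$, or $B^{s-1}_{\p,2}$ and $B^{1-s}_{\p',2}$ --- which is where $s\ge1/2$ (and, through the nonlinear estimate, $s<n/2$) is genuinely used.
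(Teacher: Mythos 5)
Your proposal is correct and follows essentially the same route as the paper: conjugate by the unitary group, expand $P(u(t))$ sesquilinearly via the Duhamel decomposition, justify the Fubini manipulation of the double time integral through the Strichartz--duality coupling on $(L^{\ga}_{t}B^{s}_{\p,2})\times(L^{\ga'}_{t}B^{s}_{\p',2})$, reconstitute $u$ from the integral equation so that only $\int_0^t\re\langle f(u),\overline{\n u}\rangle\,d\ta$ survives, and kill that term by density of Schwartz functions together with the divergence identity $\re(\overline{f(v)}\,\n v)=\tfrac{\l}{p+1}\n(|v|^{p+1})$. The only cosmetic differences are your bookkeeping at a general base time $t'$ with an explicit cancellation between the cross and quadratic Duhamel terms (the paper absorbs these directly by substituting the integral equation) and your explicit smooth-in-time regularization of $f(u)$, which the paper leaves implicit in its interpretation of the time integrals as duality couplings.
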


\begin{rem} \label{rem:21}
Cazenave-Weissler \cite{bib003} prove that if $0 < s<\min\{1, n/2\}$, $\f \in H^{s}$ and $1 \leq p < 1+ 4/(n-2s)$, then the Cauchy problem (\ref{nls}) have an unique solution $u \in C ([0, T], H^{s}(\R^{n})) \cap L^{\ga}((0, T), B^{s}_{\p, 2}(\R^{n}))$ with some admissile pair $(\ga, \p)$ as in (\ref{ad:01}). We remark that $u \in L^{\ga}_{t}B^{s}_{\p, 2}$ implies $f(u) \in L^{\ga'}_{t} B^{s}_{\p',2}$.  
\end{rem}
\begin{rem}
Proposition \ref{pro:01} holds for more general nonlinearities satisfying the following conditions (see \S 4.9 of \cite{bib001}): 
\begin{enumerate}
\item[(A1)] $f \in C(\C, \C)$, $f(0)=0$ and there exists $p \in [1, 1+4/(n-2s))$ such that $f$ satisfies 
\begin{align*}
|f(z_{1})-f(z_{2})| \leq C(1+|z_{1}|^{p-1}+|z_{2}|^{p-1})|z_{1}-z_{2}| 
\end{align*}
for any $z_{1}$, $z_{2} \in \C$.
\item[(A2)] for all $\ta \in \R$, $f(\ta) \in \R$, and for any $z \in \C$ and $\t \in \R$, $f(e^{i \t}z) = e^{i \t}f(z)$.
\end{enumerate}
Note that for the nonlinearity $f$ satisfying (A1) and (A2), we deduce that $f(u) \in L^{\infty}_{t}H^{s} + L^{\ga'}_{t} B^{s}_{\p',2}$ if $u \in C ([0, T], H^{s}(\R^{n})) \cap L^{\ga}((0, T), B^{s}_{\p, 2}(\R^{n}))$.
\end{rem}
Second aim of this paper is to derive the pseudo conformal conservation law without approximating procedure, using Ozawa's idea \cite{bib002}. The locally well-poseness of (\ref{nls}) in $\Si$ is well known as follows: 
\begin{thm}(see \cite{bib001}) \label{thm:01}
Let $\l \in \R$ and $\a(n)= 1+ 4/(n-2)$ if $n \geq 3$, $\a(n) =\infty$ if $n=1$, $2$. Assume that $1<p<\a(n)$. for any $\f \in H^{1}$, there exists $T>0$ such that (\ref{nls}) has a unique solution $u \in C([0, T], H^{1})\cap L^{q}([0,T], W^{1,r})$, where a pair $(q,r)$ is some admissible pair.  Furthermore, if $\f \in \Si$, then $u \in C([0,T], \Si)$.
\end{thm}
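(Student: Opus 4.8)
The plan is to prove this by the standard contraction-mapping argument on the Duhamel formulation, supplemented at the end by the action of the Galilei operator $J(t):=x+it\n$ to reach $\Si$. First I would fix the admissible pair adapted to the nonlinearity by $r=p+1$ and $q=4(p+1)/(n(p-1))$; the hypothesis $p<\a(n)$ is exactly what guarantees that $(q,r)$ is admissible with $q>2$ (for $n\ge 3$ this is equivalent to $q\ge 2$, and for $n=1,2$ it is automatic), and the Sobolev embedding $H^1\hookrightarrow L^{p+1}$ holds because then $2\le p+1<2n/(n-2)$ when $n\ge 3$ and for all $p$ when $n=1,2$. For $M,T>0$ set
\[
X_{T,M}=\bigl\{u\in C([0,T],H^1)\cap L^q([0,T],W^{1,r}):\ \|u\|_{L^\infty_tH^1}+\|u\|_{L^q_tW^{1,r}}\le M\bigr\},
\]
equipped with the complete metric $d(u,v)=\|u-v\|_{L^\infty_tL^2}+\|u-v\|_{L^q_tL^r}$, and define $\P(u)(t)=U(t)\f-i\int_0^tU(t-\ta)f(u(\ta))\,d\ta$. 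The reason the distance is measured in $L^2$ and $L^r$, rather than in $H^1$ and $W^{1,r}$, is that $f(u)=\l|u|^{p-1}u$ is only locally Lipschitz, not $C^1$, when $p$ is not an odd integer, so differences cannot be controlled at the level of one derivative.

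The first ingredient is the linear (Strichartz) estimate: by Lemma 1.1 applied to $\f$ and $\n\f$ (resp.\ to $f(u)$ and $\n f(u)$) and using that $\n$ commutes with $U(t)$,
\[
\|\P(u)\|_{L^\infty_tH^1}+\|\P(u)\|_{L^q_tW^{1,r}}\le C\|\f\|_{H^1}+C\|f(u)\|_{L^{q'}_tW^{1,r'}},
\]
together with the same inequality for $\P(u)-\P(v)$, with $f(u)-f(v)$ on the right and the $L^2/L^r$ norms on the left. The second ingredient is the nonlinear estimate: since $|\n f(u)|\lesssim|u|^{p-1}|\n u|$ and $\tfrac1{r'}=\tfrac{p-1}{r}+\tfrac1r$, Hölder in space gives $\|f(u(t))\|_{W^{1,r'}}\lesssim\|u(t)\|_{L^r}^{p-1}\|u(t)\|_{W^{1,r}}$, and Hölder in time with $\tfrac1{q'}=\tfrac{p-1}{a}+\tfrac1q$ (where $a<\infty$ precisely because $q>2$) yields
\[
\|f(u)\|_{L^{q'}_tW^{1,r'}}\lesssim\|u\|_{L^a_tL^r}^{p-1}\|u\|_{L^q_tW^{1,r}}\lesssim T^{(p-1)/a}\|u\|_{L^\infty_tH^1}^{p-1}\|u\|_{L^q_tW^{1,r}},
\]
and the difference estimate is identical via $|f(u)-f(v)|\lesssim(|u|^{p-1}+|v|^{p-1})|u-v|$. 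Choosing $M=2C\|\f\|_{H^1}$ and then $T$ so small that $CM^{p-1}T^{(p-1)/a}\le\tfrac12$, the map $\P$ sends $(X_{T,M},d)$ into itself and is a contraction; its fixed point is the desired solution, uniqueness in the whole class follows from the same difference estimate on any common subinterval, and $u\in C([0,T],H^1)$ is read off from the Duhamel formula by dominated convergence.

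For $\f\in\Si$ I would rerun the fixed-point argument in the subspace of $X_{T,M}$ on which, in addition, $\|J(\cdot)u\|_{L^\infty_tL^2}+\|J(\cdot)u\|_{L^q_tL^r}\le M$, with $J(0)u(0)=x\f\in L^2$ as the initial datum for the $J$-equation. Two facts make this work: $J(t)$ commutes with $i\pa_t+\tfrac12\D$, so $J(t)U(t-\ta)=U(t-\ta)J(\ta)$ and the Strichartz estimates apply to $J(\cdot)u$ verbatim; and the pointwise bound $|J(t)f(u)|\lesssim|u|^{p-1}|J(t)u|$, which follows from the factorization $J(t)=it\,e^{i|x|^2/2t}\n\bigl(e^{-i|x|^2/2t}\,\cdot\,\bigr)$ together with the gauge covariance $f(e^{i\t}z)=e^{i\t}f(z)$ for $\t\in\R$ (so that $e^{-i|x|^2/2t}f(u)=f(e^{-i|x|^2/2t}u)$ and the chain rule applies). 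These reduce the estimate of $J(\cdot)f(u)$ to exactly the nonlinear estimate already established, with $\n u$ replaced by $J(\cdot)u$, so the contraction closes after possibly shrinking $T$; then $xu=J(t)u-it\n u$ with $u\in C([0,T],H^1)$ gives $xu\in C([0,T],L^2)$, i.e.\ $u\in C([0,T],\Si)$.

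I expect the main obstacle to be the nonlinear estimate: it must simultaneously produce a positive power of $T$ (needed for the contraction) and be compatible with the mere Lipschitz regularity of $f$, which is precisely what forces the two-tier construction — a ball defined in $H^1\cap W^{1,r}$ but a metric in $L^2\cap L^r$ — and, for the $\Si$-part, verifying the factorization identity that places $J(t)$ on the same footing as $\n$.
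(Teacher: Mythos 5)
The paper states this theorem without proof, citing Cazenave's book, and your contraction-mapping argument --- the admissible pair $(q,r)=(4(p+1)/(n(p-1)),\,p+1)$, the Kato two-tier construction (ball in $H^{1}\cap W^{1,r}$, metric in $L^{2}\cap L^{r}$), and the Galilei operator $J(t)=x+it\n$ with its gauge factorization for the $\Si$-part --- is precisely the standard proof found there, and it is correct. The one point you gloss over is the completeness of $(X_{T,M},d)$, which requires weak lower semicontinuity of the $H^{1}$ and $W^{1,r}$ norms along sequences converging only in $L^{2}$ and $L^{r}$; this is routine but should be said.
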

For any $\wt{T} \in (0, T)$, the solution $u$ as in Theorem \ref{thm:01} satisfies the following intergral equation:  
\begin{align}
u(t)=U(t-\wt{T})u(\wt{T})- i \int_{\wt{T}}^{t}U(t-s)f(u(s))ds \label{eq:02}
\end{align}
for all $t \in [\wt{T},T]$. Using the transform (\ref{tra:01}), for any $\wt{T} \in (0, T)$, it follows from (\ref{eq:02}) that $v$ is a solution of the following intergral equation:  
\begin{align}
v(t)=U \left(t-\frac{1}{\wt{T}}\right)v\left(\frac{1}{\wt{T}}\right)- i \int_{\frac{1}{\wt{T}}}^{t}s^{\frac{n(p-1)-4}{2}}U(t-s)f(v(s))ds \label{eq:03}
\end{align}
with $v \in C\left(\left[1/T, 1/\wt{T} \right], \Si \right)$ (see Lemma 2.8 of \cite{bib008}). Note that the intgral equation (\ref{eq:03}) equivalents to the equation (\ref{eq:11}). The second result in this paper is as follows:
\begin{pro}\label{pro:02}
Let $p>1$, $\l \in \R$. A pair $(q,r)$ denotes some admissible pair. Let $v \in C([T_{1}, T_{2}], H^{1}) \cap L^{q}([T_{1}, T_{2}], W^{1,r})$ be a mild solution of the integral equation
\begin{align}
v(t)=U(t-T_{1})\f- i \int_{T_{1}}^{t}s^{\frac{n(p-1)-4}{2}}U(t-s)f(v(s))ds \label{eq:04}
\end{align}
for some $\f \in H^{1}$ and $T_{1}$, $T_{2}>0$. Then 
\begin{align*}
&\frac{1}{2}\|\n v(t)\|_{L^{2}}^{2} + \frac{2\l t^{\frac{n(p-1)-4}{2}}}{p+1}\|v(t)\|_{L^{p+1}}^{p+1} \\
&= \frac{1}{2}\|\n \f\|_{L^{2}}^{2} +\frac{2\l T_{1}^{\frac{n(p-1)-4}{2}}}{p+1}\|v(T_{1})\|_{L^{p+1}}^{p+1} \\
&\hspace{3cm} + \frac{\l \{n(p-1)-4\}}{p+1}\int_{T_{1}}^{t}s^{\frac{n(p-1)-6}{2}}\|v(s)\|_{L^{p+1}}^{p+1} ds
\end{align*}
for all $t \in [T_{1}, T_{2}]$.
\end{pro}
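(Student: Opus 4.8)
The plan is to establish two auxiliary identities, one for the kinetic term $\frac12\|\nabla v(t)\|_{L^{2}}^{2}$ and one for the potential term $\frac{2\l t^{(n(p-1)-4)/2}}{p+1}\|v(t)\|_{L^{p+1}}^{p+1}$, and then to add them so that a common remainder cancels. Write $\mu(s):=s^{(n(p-1)-4)/2}$, which is $C^{1}$ on $[T_{1},T_{2}]$ because $T_{1}>0$, with $\mu'(s)=\tfrac{n(p-1)-4}{2}s^{(n(p-1)-6)/2}$. From the local theory (Strichartz estimates and $1<p<\a(n)$; compare Remark \ref{rem:21} and \cite{bib001,bib003}) one has $f(v)\in L^{q'}([T_{1},T_{2}];W^{1,r'})$, hence $\mu f(v)\in L^{q'}([T_{1},T_{2}];W^{1,r'})$ and $\nabla f(v)\in L^{q'}([T_{1},T_{2}];L^{r'})$. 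Moreover, for an admissible pair $(q,r)$ one has the Sobolev embedding $W^{1,r'}\hookrightarrow L^{2}$ and hence, dually, $L^{2}\hookrightarrow W^{-1,r}$; consequently, differentiating \eqref{eq:04}, $\pa_{t}v=\tfrac{i}{2}\D v-i\mu f(v)\in L^{q'}_{\mathrm{loc}}((T_{1},T_{2});W^{-1,r})$. Since $\nabla v\in L^{q}([T_{1},T_{2}];L^{r})$, the function $s\mapsto\mu(s)\,\im(\nabla v(s),\nabla f(v(s)))_{L^{2}}$ belongs to $L^{1}([T_{1},T_{2}])$.

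\emph{Kinetic term.} Here I would use Ozawa's identity \cite{bib002} for the inhomogeneous linear Schr\"odinger equation: if $w_{0}\in L^{2}$, $F\in L^{q'}([T_{1},T_{2}];L^{r'})$ and $w(t)=U(t-T_{1})w_{0}-i\int_{T_{1}}^{t}U(t-s)F(s)\,ds$, then $w\in C([T_{1},T_{2}];L^{2})\cap L^{q}([T_{1},T_{2}];L^{r})$ and $\|w(t)\|_{L^{2}}^{2}=\|w_{0}\|_{L^{2}}^{2}-2\,\im\int_{T_{1}}^{t}(w(s),F(s))_{L^{2}}\,ds$. (One expands $\|w(t)\|^{2}$ into the free part, the Duhamel part and the cross term, uses the unitarity of $U$ together with $U(t-\s)^{\ast}U(t-\ta)=U(\s-\ta)$ and Fubini's theorem to re-express the last two, and finally the elementary identity $2\,\re(b,G)-2\,\im(a,G)=-2\,\im(a-ib,G)$.) Applying $\nabla$ to \eqref{eq:04} gives $\nabla v(t)=U(t-T_{1})\nabla\f-i\int_{T_{1}}^{t}\mu(s)U(t-s)\nabla f(v(s))\,ds$, and the identity with $w=\nabla v$, $w_{0}=\nabla\f$, $F=\mu\nabla f(v)$ yields
\begin{align*}
\tfrac12\|\nabla v(t)\|_{L^{2}}^{2}=\tfrac12\|\nabla\f\|_{L^{2}}^{2}-\int_{T_{1}}^{t}\mu(s)\,\im(\nabla v(s),\nabla f(v(s)))_{L^{2}}\,ds. \tag{A}
\end{align*}

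\emph{Potential term.} I claim that $s\mapsto\|v(s)\|_{L^{p+1}}^{p+1}$ is absolutely continuous on $[T_{1},T_{2}]$ with $\frac{d}{ds}\|v(s)\|_{L^{p+1}}^{p+1}=\frac{p+1}{2\l}\,\im(\nabla v(s),\nabla f(v(s)))_{L^{2}}$ a.e. Since the functional $N(w)=\|w\|_{L^{p+1}}^{p+1}$ is of class $C^{1}$ on $L^{p+1}(\R^{n})$ with $N'(w)=(p+1)|w|^{p-1}w$, the formal derivative along $v$ is $(p+1)\,\re\langle\pa_{s}v,|v|^{p-1}v\rangle$, the pairing being between $W^{-1,r}$ and $W^{1,r'}$ (both factors present for a.e. $s$ by the first paragraph). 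To legitimize this I would mollify $v$ in the time variable, $v_{\d}=\p_{\d}\ast_{s}v$ (on compact subintervals, with the endpoints handled by continuity of $v$ in $H^{1}$): then $v_{\d}$ is $C^{1}$ in $s$ with values in $H^{1}\cap W^{1,r}$, the genuine chain rule gives $\frac{d}{ds}N(v_{\d}(s))=(p+1)\,\re\int|v_{\d}|^{p-1}\overline{v_{\d}}\,\pa_{s}v_{\d}\,dx$, and after integrating in $s$ one passes to the limit $\d\to0$ using $v_{\d}\to v$ in $L^{q}_{\mathrm{loc}}(W^{1,r})$, $\pa_{s}v_{\d}\to\pa_{s}v$ in $L^{q'}_{\mathrm{loc}}(W^{-1,r})$ and $|v_{\d}|^{p-1}v_{\d}\to|v|^{p-1}v$ in $L^{q'}_{\mathrm{loc}}(W^{1,r'})$ (continuity of the Nemytskii map $W^{1,r}\ni w\mapsto|w|^{p-1}w\in W^{1,r'}$). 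Substituting $\pa_{s}v=\tfrac{i}{2}\D v-i\mu\l|v|^{p-1}v$ into the pairing, the contribution of $-i\mu\l|v|^{p-1}v$ is $-i\mu\l\|v\|_{L^{2p}}^{2p}$ (note $|v|^{p-1}v\in W^{1,r'}\hookrightarrow L^{2}$ a.e. $s$), which is purely imaginary and drops out of the real part, while $\tfrac{i}{2}\D v$ gives, after integration by parts, $-\tfrac{i}{2\l}(\nabla v,\nabla f(v))_{L^{2}}$, whose real part is $\tfrac{1}{2\l}\im(\nabla v,\nabla f(v))_{L^{2}}$. This proves the claim; multiplying by $\mu\in C^{1}$ and integrating,
\begin{align*}
\frac{2\l\mu(t)}{p+1}\|v(t)\|_{L^{p+1}}^{p+1}-\frac{2\l\mu(T_{1})}{p+1}\|\f\|_{L^{p+1}}^{p+1}
&=\frac{2\l}{p+1}\int_{T_{1}}^{t}\mu'(s)\|v(s)\|_{L^{p+1}}^{p+1}\,ds\\
&\quad+\int_{T_{1}}^{t}\mu(s)\,\im(\nabla v(s),\nabla f(v(s)))_{L^{2}}\,ds. \tag{B}
\end{align*}

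Adding (A) and (B), the two integrals of $\mu(s)\,\im(\nabla v(s),\nabla f(v(s)))_{L^{2}}$ cancel; since $\frac{2\l}{p+1}\mu'(s)=\frac{\l\{n(p-1)-4\}}{p+1}s^{(n(p-1)-6)/2}$, $\mu(T_{1})=T_{1}^{(n(p-1)-4)/2}$ and $v(T_{1})=\f$, the result is precisely the asserted identity. The genuinely routine points (the nonlinear Strichartz bound $f(v)\in L^{q'}_{\mathrm{loc}}W^{1,r'}$, the $C^{1}$ regularity of $N$ on $L^{p+1}$, the convergence of the time mollifications) can be taken from the literature; the main obstacle is the potential term, i.e.\ justifying the chain rule for $\|\cdot\|_{L^{p+1}}^{p+1}$ along a solution that a priori is differentiable in time only with values in $H^{-1}$. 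This is exactly where the extra regularity $v\in L^{q}_{\mathrm{loc}}W^{1,r}$ furnished by the Strichartz estimates is used: it forces $f(v)\in L^{q'}_{\mathrm{loc}}W^{1,r'}\hookrightarrow L^{q'}_{\mathrm{loc}}L^{2}$, which simultaneously makes the pairing $\langle\pa_{s}v,|v|^{p-1}v\rangle$ meaningful and lets the mollification argument close. (If the chosen admissible pair has $q=\infty$, one works with $q'=1$ throughout.)
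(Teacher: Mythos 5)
Your proof is correct and follows essentially the same route as the paper: your identity (A) is exactly the paper's expansion of $\|\n v(t)\|_{L^{2}}^{2}$ via the Duhamel formula, unitarity, Strichartz duality and Fubini, and your identity (B) is the paper's chain rule (\ref{last:01}) (justified there likewise by regularization, via Lemma 5.1 of \cite{bib005} and Lemma \ref{lem:11}) with the equation substituted for $\pa_{s}v$. The only organizational difference is that you insert the equation inside the chain-rule computation and cancel the common term $\int\mu(s)\im(\n v,\n f(v))\,ds$ directly, whereas the paper first converts the kinetic remainder into $\re\langle f(v),\overline{\pa_{t}v}\rangle$ through a $(1-\e\D)^{-1}$ regularization and only then invokes the chain rule.
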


\begin{rem}
Applying Proposition \ref{pro:02} to the integral equation (\ref{eq:03}), we see that
\begin{align}
&\frac{1}{2}\|\n v(t)\|_{L^{2}}^{2} + 2t^{\frac{n(p-1)-4}{2}}\frac{\l}{p+1}\|v(t)\|_{L^{p+1}}^{p+1} \nonumber \\
&= \frac{1}{2} \left\|\n v\left(\frac{1}{\wt{T}}\right) \right\| + 2\left(\frac{1}{\wt{T}}\right)^{\frac{n(p-1)-4}{2}}\frac{\l}{p+1}\left\|v\left(\frac{1}{\wt{T}}\right)\right\|_{L^{p+1}}^{p+1} \nonumber \\
&\hspace{3cm} + \frac{\l \{n(p-1)-4\}}{p+1}\int_{\frac{1}{\wt{T}}}^{t}s^{\frac{n(p-1)-6}{2}}\|v(s)\|_{L^{p+1}}^{p+1} ds. \label{cons:01}
\end{align}
Combining (\ref{tran:01}) with (\ref{cons:01}), we get
\begin{align*}
&\frac{1}{2} \|(y+is \n) u(s)\|_{L^{2}}^{2} + \frac{2\l s^{2}}{p+1}\|u(s)\|_{L^{p+1}}^{p+1} \\
&=\frac{1}{2} \|(y+i\wt{T}\n)u(s)\|_{L^{2}}^{2}+ \frac{2\l \wt{T}^{2}}{p+1}\|u(\wt{T})\|_{L^{p+1}}^{p+1} \\
&\hspace{3cm} - \frac{\l \{n(p-1)-4\}}{p+1}\int_{\wt{T}}^{s}\ta \|u(\ta)\|_{L^{p+1}}^{p+1} d\ta
\end{align*}
for all $s \in [\wt{T}, T]$. Finally, it follows from $u \in C([0,T], \Si)$ that if $\wt{T} \rightarrow 0$, then we get the pseudo conformal conservation law (\ref{ps:02}).
\end{rem}

\section{The proof of main results}

\begin{proof}[Proof of Proposition \ref{pro:01}]
For all $t \in [0,T]$, we obtain
\begin{align*}
P(u(t)) &=\left\langle u, \overline{\n u} \right\rangle_{H^{1/2} \times H^{-1/2}} \\
&= \left\langle U(-t)u, \overline{U(-t)\n u} \right\rangle_{H^{1/2} \times H^{-1/2}} \\
&= \left\langle \f, \overline{\n \f} \right\rangle_{H^{1/2} \times H^{-1/2}} \\
&\quad + \left\langle \f, \overline{-i \int_{0}^{t}U(-\ta)\n f(u(\ta))d\ta} \right\rangle_{H^{1/2} \times H^{-1/2}} \\
&\quad +\left\langle -i \int_{0}^{t}U(-\wt{\ta})f(u(\wt{\ta}))d\wt{\ta} , \overline{\n \f} \right\rangle_{H^{1/2} \times H^{-1/2}} \\
&\quad + \left\langle -i \int_{0}^{t}U(-\wt{\ta})f(u(\wt{\ta}))d\wt{\ta}, \overline{-i \int_{0}^{t}U(-\ta)\n f(u(\ta))d\ta} \right\rangle_{H^{1/2} \times H^{-1/2}} \\
&= P(\f) \\
&\quad - \int_{0}^{t} \left\langle U(\ta)\f, \overline{i \n f(u(\ta))} \right\rangle d\ta - \int_{0}^{t} \left\langle i f(u(\wt{\ta})), \overline{U(\wt{\ta}) \n \f} \right\rangle d\wt{\ta} \\
&\quad - \int_{0}^{t} \left\langle i f(u(\wt{\ta})), \overline{-i \int_{0}^{\wt{\ta}} U(\wt{\ta}-\ta)\n f(u(\ta)) d\ta}  \right\rangle d\wt{\ta} \\
&\quad - \int_{0}^{t} \left\langle -i \int_{0}^{\ta} U(\ta-\wt{\ta}) f(u(\wt{\ta})) d\wt{\ta} , \overline{i \n f(u(\ta))} \right\rangle d\ta,
\end{align*}
where concatenating Strichartz estimates and Remark \ref{rem:21}, time integrals of the scalar product of each term of the last line in the above are understood as the duality coupling on $(L^{\ga}_{t}B_{\p, 2}^{1/2}) \times (L^{\ga'}_{t}B_{\p', 2}^{-1/2})$, $(L^{\ga'}_{t}B_{\p', 2}^{1/2}) \times (L^{\ga}_{t}B_{\p, 2}^{-1/2})$, $(L^{\ga'}_{t}B_{\p', 2}^{1/2}) \times (L^{\ga}_{t}B_{\p, 2}^{-1/2})$ and 
$(L^{\ga}_{t}B_{\p, 2}^{1/2}) \times (L^{\ga'}_{t}B_{\p', 2}^{-1/2})$, respectively. Using the integral equation (\ref{ieq:01}), we compute
\begin{align*}
P(u(t))&= P(\f) \\
&\quad - \int_{0}^{t} \left\langle u(\ta), \overline{i \n f(u(\ta))} \right\rangle d\ta - \int_{0}^{t} \left\langle i f(u(\wt{\ta})), \overline{\n u(\wt{\ta})} \right\rangle d\wt{\ta} \\
&= P(u_{0}) + 2 \int_{0}^{t} \re \left\langle f(u(\wt{\ta})), \overline{\n u(\wt{\ta})} \right\rangle d\wt{\ta}.
\end{align*}
Note that the time integral of the scalar product of the last line in the above is understood as the duality coupling on $(L^{\ga'}_{t}B_{\p', 2}^{1/2}) \times (L^{\ga}_{t}B_{\p, 2}^{-1/2})$. Hence, to continue the proof, we need the following elementary lemma:
\begin{lem} \label{lem:11}
Let $X$ and $Y$ be Banach spaces such that $Y \hookrightarrow X$ and $X^{*} \hookrightarrow Y^{*}$ with dense embedding, where $X^{*}$ and $Y^{*}$ denote the dual spaces of $X$ and $Y$, respectively. Then if a bounded sequence $\{\v_{n}\}_{n=1}^{\infty} \subset Y$ satisfies $\v_{n} \rightarrow 0$ in $X$ as $n \rightarrow \infty$, then for any $f \in Y^{*}$, $\langle \v_{n}, f \rangle_{Y \times Y^{*}} \rightarrow 0$ as $n \rightarrow \infty$. 
\end{lem}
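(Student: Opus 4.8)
The plan is a routine density plus uniform boundedness argument: reduce a general $f \in Y^{*}$ to the case $f \in X^{*}$, where the conclusion is immediate since $\v_{n} \to 0$ in $X$.

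First I would fix the continuous embedding $j \colon Y \hookrightarrow X$ and record that the embedding $X^{*} \hookrightarrow Y^{*}$ is, under the usual identification, its adjoint, so that
\[
\langle y, g \rangle_{Y \times Y^{*}} = \langle j(y), g \rangle_{X \times X^{*}} \qquad \text{for all } y \in Y,\ g \in X^{*}.
\]
I would also set $M := \sup_{n} \|\v_{n}\|_{Y}$, which is finite by the boundedness hypothesis.

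Next, fix $f \in Y^{*}$ and $\e > 0$. By density of $X^{*}$ in $Y^{*}$, choose $g \in X^{*}$ with $\|f - g\|_{Y^{*}} < \e$. Then
\[
|\langle \v_{n}, f \rangle_{Y \times Y^{*}}| \leq |\langle \v_{n}, f - g \rangle_{Y \times Y^{*}}| + |\langle \v_{n}, g \rangle_{Y \times Y^{*}}| \leq M \e + \|g\|_{X^{*}} \|\v_{n}\|_{X},
\]
where the first term is estimated using $\|\v_{n}\|_{Y} \leq M$, and the second using the compatibility of the pairings displayed above. Letting $n \to \infty$ and using $\v_{n} \to 0$ in $X$ yields $\limsup_{n \to \infty} |\langle \v_{n}, f \rangle_{Y \times Y^{*}}| \leq M \e$; since $\e > 0$ is arbitrary, $\langle \v_{n}, f \rangle_{Y \times Y^{*}} \to 0$.

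There is no substantial obstacle here. The only point deserving a line of care is the identification of the two duality couplings on $Y \times X^{*}$ — that is, the fact that the embedding $X^{*} \hookrightarrow Y^{*}$ is the restriction map dual to $Y \hookrightarrow X$ — which is what makes the $\langle \v_{n}, g \rangle$ term collapse to an $X$-pairing and hence vanish. Density of $X^{*}$ in $Y^{*}$ is used solely to reach $f$ not already lying in $X^{*}$, and the uniform bound $M$ is precisely what prevents the approximation error from blowing up as $g$ is chosen closer and closer to $f$.
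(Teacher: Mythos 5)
Your proof is correct, and it is the standard density-plus-uniform-boundedness argument: split $f$ as $(f-g)+g$ with $g\in X^{*}$ close to $f$ in $Y^{*}$, control the first term by the uniform bound $M$ and the second by $\|\v_{n}\|_{X}\rightarrow 0$ via the compatibility of the two pairings. The paper itself gives no proof of this lemma (it is merely labelled ``elementary''), so there is nothing to compare against; your argument is exactly the one the authors presumably had in mind.
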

Noting that for any $u \in B_{\p, 2}^{s}$, there exists $\{u_{j}\}_{j=1}^{\infty} \subset \mathscr{S}$ such that $u_{j} \rightarrow u\ in\ B_{\p, 2}^{s}$ as $j \rightarrow \infty$, and Lemma \ref{lem:11}, We obtain 
\begin{align}
\re \left\langle f(u(\wt{\ta})), \overline{\n u(\wt{\ta})} \right\rangle = \lim_{j \rightarrow \infty}\re \left\langle f(u_{j}(\wt{\ta})), \overline{\n u_{j}(\wt{\ta})} \right\rangle \ a.e.\wt{\ta}. \label{app:01}
\end{align}
In conclusion, by (\ref{app:01}) and $G(|u_{j}|) \in W^{1, 1}$ for all $j \in \N$, we get $\re \left\langle f(u(\wt{\ta})), \n u(\wt{\ta}) \right\rangle = 0\ a.e.s$, where $G(r) = \int_{0}^{r}f(\p) d\p$ for all $r \geq 0$. This completes the proof.
\end{proof}

\begin{proof}[Proof of Proposition \ref{pro:02}]
We can give the proof in a way similar to Ozawa \cite{bib002}. Acting $\n$ on (\ref{eq:04}), for all $t \in [T_{1}, T_{2}]$, we obtain  
\begin{align}
&||\n v(t)||_{L^{2}}^{2} \nonumber \\
&=||\n U(-t)v(t)||_{L^{2}}^{2} \nonumber \\
&=||\n \f||_{L^{2}}^{2}-2\im \left(U(-T_{1})\n \f, \int_{T_{1}}^{t}s^{\frac{n(p-1)-4}{2}}U(-s)\n f(v(s))ds \right)_{L^{2}} \nonumber \\
&\quad +\left\|\int_{T_{1}}^{t}s^{\frac{n(p-1)-4}{2}}U(-s)\n f(v(s))ds \right\|^{2}_{L^{2}}. \label{term:01} 
\end{align}
The second term on the RHS of (\ref{term:01}) satisfies the following equality: 
\begin{align}
&-2\im \left(U(-T_{1})\n \f, \int_{T_{1}}^{t}s^{\frac{n(p-1)-4}{2}}U(-s)\n f(v(s))ds \right)_{L^{2}} \nonumber \\
&\qquad  = -2\im\int_{T_{1}}^{t} \left\langle U(s-T_{1})\n \f, \overline{s^{\frac{n(p-1)-4}{2}} \n f(v(s))} \right\rangle ds, \label{term02}
\end{align}
where  combining Strichartz estimates with $\n f(v) \in L^{q'}_{t}L^{r'}_{x}$, the time integral of the scalar product is understood as the duality coupling on $(L_{t}^{q}L^{r}_{x})\times (L^{q'}_{t}L^{r'}_{x})$ with $(q,r)=(4(p+1)/(n(p-1)), p+1)$. For the last term on the RHS of (\ref{term:01}), Fubini's theorem implies
\begin{align}
&\left\|\int_{T_{1}}^{t}s^{\frac{n(p-1)-4}{2}}U(-s)\n f(v(s))ds \right\|^{2}_{L^{2}} \nonumber \\
&\qquad =2\re \int_{T_{1}}^{t}\left\langle s^{\frac{n(p-1)-4}{2}} \n f(v(s)), \overline{\int_{T_{1}}^{s}U(s-s') \n f(v(s'))ds'} \right\rangle ds, \label{term:03}
\end{align}
where the time integral of the scalar product is understood as the duality coupling on $(L^{q'}_{t}L^{r'}_{x}) \times (L_{t}^{q}L^{r}_{x})$. Concatenating (\ref{term:01}) - (\ref{term:03}), we compute
\begin{align*}
&\|\n v(t)\|_{L^{2}}^{2} \\
&=||\n \f||_{L^{2}}^{2}-2\im\int_{T_{1}}^{t}\left\langle U(s-T_{1})\n \f, \overline{s^{\frac{n(p-1)-4}{2}} \n f(v(s))} \right\rangle ds \\
&\qquad +2\re \int_{T_{1}}^{t}\left\langle s^{\frac{n(p-1)-4}{2}} \n f(v(s)), \overline{\int_{T_{1}}^{s}U(s-s') \n f(v(s'))ds'} \right\rangle ds \\
&=||\n \f||_{L^{2}}^{2}+2\im \int_{0}^{t} \left\langle s^{\frac{n(p-1)-4}{2}} \n f(v(s)), \overline{U(s-T_{1})\n \f} \right\rangle ds \\
&\qquad +2\im \int_{0}^{t}\left\langle s^{\frac{n(p-1)-4}{2}} \n f(v(s)), \overline{-i\int_{T_{1}}^{s}U(s-s') \n f(v(s'))ds'} \right\rangle ds  \\
&=||\n \f||_{L^{2}}^{2}+\lim_{\e \downarrow 0}2\im \int_{T_{1}}^{t}s^{\frac{n(p-1)-4}{2}} \left\langle (1-\e\D)^{-1} \n f(v(s)), \overline{\n v(s)} \right\rangle ds, 
\end{align*}
where the last equality in the above holds by using (\ref{eq:04}). Taking the duality coupling between the equation (\ref{eq:11}) and $(1-\e\D)^{-1}f(v)$ on $H^{-1}\times H^{1}$ and using $\im\{\langle (1-\e\D)^{-1}f(v),\overline{f(v)}\rangle \}=0$, we obtain 
\begin{align*}
\im \left\langle (1-\e\D)^{-1}\n f(v),\overline{\n v} \right\rangle &= \im\{-i \left\langle (1-\e\D)^{-1}f(v),\overline{\pa_{t}v} \right\rangle\}. 
\end{align*}
From these equalities, we can show
\begin{align}
&\|\n v(t)\|_{L^{2}}^{2} \nonumber \\
&=||\n \f||_{L^{2}}^{2}-\lim_{\e \downarrow 0}4\re \int_{0}^{t}s^{\frac{n(p-1)-4}{2}} \left\langle (1-\e\D)^{-1}f(v(s)), \overline{\pa_{t}v(s)} \right\rangle ds \nonumber \\
&=||\n \f||_{L^{2}}^{2}-4\re \int_{T_{1}}^{t}s^{\frac{n(p-1)-4}{2}} \left\langle f(v(s)), \overline{\pa_{t}v(s)} \right\rangle ds. \label{5eq:01}
\end{align}
Note that in the above, the time integral of the scalar product in the last line is understood as the duality coupling on $(L^{q'}_{t}W^{1,r'}_{x}) \times (L_{t}^{q}W^{-1, r}_{x})$. From (\ref{5eq:01}), we can continue as follows:
\begin{align*}
\|\n v(t)\|_{L^{2}}^{2} &=||\n \f||_{L^{2}}^{2}- 2\int_{T_{1}}^{t}s^{\frac{n(p-1)-4}{2}} \frac{d}{d s}\left( \frac{2\l}{p+1}\|v(s)\|_{L^{p+1}}^{p+1}\right) ds \\
&=||\n \f||_{L^{2}}^{2}- 2\int_{T_{1}}^{t}\frac{d}{d s} \left(s^{\frac{n(p-1)-4}{2}} \frac{2\l}{p+1}\|v(s)\|_{L^{p+1}}^{p+1}\right) ds \\
&\quad +2\int_{T_{1}}^{t}\frac{n(p-1)-4}{2} s^{\frac{n(p-1)-6}{2}} \frac{2\l}{p+1} \|v(s)\|_{L^{p+1}}^{p+1} ds \\
&=||\n \f||_{L^{2}}^{2} - 2t^{\frac{n(p-1)-4}{2}}\frac{2\l}{p+1}\|v(t)\|_{L^{p+1}}^{p+1} + 2T_{1}^{\frac{n(p-1)-4}{2}}\frac{2\l}{p+1}\|v(T_{1})\|_{L^{p+1}}^{p+1} \\
&\quad + \frac{2\l \{n(p-1)-4\}}{p+1}\int_{T_{1}}^{t}s^{\frac{n(p-1)-6}{2}}\|v(s)\|_{L^{p+1}}^{p+1} ds,
\end{align*}
since we can show that 
\begin{align}
 \frac{d}{d s}\left( \frac{2\l}{p+1}\|v(s)\|_{L^{p+1}}^{p+1}\right) = 2\re \langle f(v(s)), \overline{\pa_{t}v(s)}\rangle\ in\ L^{1}(T_{1}, T_{2}). \label{last:01}
\end{align}
We can justify the equality (\ref{last:01}) above by combining the way to the proof of Lemma 5.1 of \cite{bib005} with Lemma \ref{lem:11}. This completes the proof.
\end{proof}

\section*{Acknowledgment}
The authors would like to express deep gratitude to Professor Mishio Kawashita for helpful comments and warm encouragements. The authors are deeply grateful to Professor Tohru Ozawa for important comments. The authors would like to thank Dr. Yuta Wakasugi for constructive comments for the proof of Proposition \ref{pro:01}.
The authors would like to thank Professor Satoshi Masaki for helpful comments for how to derive the pseudo conformal conservation law.


\begin{thebibliography}{99}
\bibitem{bib007}C. Sulem, P.L. Sulem, \textit{The nonlinear Schr{\"o}dinger equation: Self-focusing and wave collapse}, Applied Mathematical Sciences, Vol.139, Springer (1999).
\bibitem{bib006}F. Merle, P. Raphael, \textit{The blow-up dynamic and upper bound on the blow-up rate for critical nonlinear Schr{\"o}dinger equation}, Annals of Mathematics, 161 (2005), 157-222.
\bibitem{bib005}H. Miyazaki, \textit{The derivation of the conservation law for
defocusing nonlinear Schr{\"o}dinger equations with non-vanishing initial data at infinity}, J. Math. Anal. Appl. 417 580-600 (2014).
http://dx.doi.org/10.1016/j.jmaa.2014.03.055. 
\bibitem{bib001}T. Cazenave, \textit{Semilinear Schr{\"o}dinger Equations}, Courant Lecture Notes in Math., vol. 10, New York University, American Mathematical Society, Providence, RI, 2003.
\bibitem{bib003}T. Cazenave, F. B. Weissler, \textit{The Cauchy problem for the critical nonlinear Schr{\"o}dinger equation in $H^{s}$}, Nonlinear Anal. 14: 807-836, 1990. 
\bibitem{bib002}T. Ozawa, \textit{Remarks on proofs of conservation laws for nonlinear Schr{\"o}dinger equations}, Calc. Var. Partial Differ. Equ.
25, No. 3, 403-408 (2006).
\bibitem{bib008}Y. Tsutsumi, \textit{Scattering problem for nonlinear Schr{\"o}dinger equations}, Ann. Inst. H. Poincar{\'e}, Phys. Th{\'e}or, t. 43, 1985, p. 321-347.
\end{thebibliography}
\end{document}